\documentclass[12pt]{amsart}
\usepackage[english]{babel}
\usepackage{graphicx}
\usepackage[centertags]{amsmath}
\usepackage{amsfonts}
\usepackage{amssymb}
\usepackage{amsthm}
\usepackage{newlfont}
\usepackage{color}
\vfuzz2pt 
\hfuzz2pt 

\newtheorem{thm}{Theorem}[section]
\newtheorem{cor}[thm]{Corollary}

\newtheorem{prop}[thm]{Proposition}
\theoremstyle{definition}

\theoremstyle{remark}
\newtheorem{rem}[thm]{Remark}
\pagestyle{empty}

\newcommand{\id}{\textrm{id}}

\textwidth=135mm \textheight=194mm 
\begin{document}
\title{Curvature properties of Riemannian manifolds with skew-circulant structures}
\bigskip
\author{Iva Dokuzova}
\date{}

\begin{abstract}
 We consider a $4$-dimensional Riemannian manifold $M$ endowed with a right skew-circulant tensor structure $S$, which is an isometry with respect to the metric $g$ and the fourth power of $S$ is minus identity. We determine a class of manifolds $(M, g, S)$, whose curvature tensors are invariant under $S$. For such manifolds we obtain properties of the Ricci tensor. Also we get expressions of the sectional curvatures of some special $2$-planes in a tangent space of $(M, g, S)$.
\end{abstract}
\maketitle
\Small{\textbf{Mathematics Subject Classification (2020)}:  53B20, 53C25, 53C15, 53C55.}

\Small{\textbf{Keywords}:  Riemannian manifold; sectional curvature; skew-circulant matrix; Ricci curvature.
 \section{Introduction}
\label{intro}

Almost Hermitian manifolds were classified with respect to the covariant derivative of the almost complex structure $J$ by A.~Gray and L.~Hervella (\cite{GrHer}).  The Hermitian manifolds form a class of manifolds with an integrable structure $J$. Part of these manifolds are the well-known and extensively explored K\"{a}hler manifolds. The locally conformal K\"{a}hler manifolds are Hermitian manifolds whose metric is conformal to a K\"{a}hler metric in a neighborhood at each point. These manifolds are studied by many geometers (for example \cite{angella, huang, Khuz, moroianu-ornea, prvanovic, vilcu}).

In \cite{gray}, there are introduced three classes of almost Hermitian manifolds, determined by Gray's curvature identities, and it is proved that every K\"{a}hler manifold satisfies these identities. Many investigations of the geometry of the almost Hermitian manifolds are devoted to the study of the curvature tensors, the Ricci tensors, the scalar curvatures and the sectional curvatures of some characteristic 2-planes of the tangent spaces of the manifolds.  We will mention some papers that consider classes of manifolds, whose curvature tensors are invariant under the almost complex structure $J$ (\cite{Sca-Vezz, gad-mont, gray-van, prvn, raz-dzhe, van, yu}).

The present paper is a continuation of our research made in \cite{dok-raz} and \cite{raz-dok}. We study a 4-dimensional Riemannian manifold $M$ with a metric $g$ and equipped with a tensor $S$ of type $(1,1)$, compatible with $g$. The tensor structure $S$ satisfies $S^{4}= -\id$ and its local components form a right skew-circulant matrix. We note that such a manifold $(M, g, S)$ is associated with a locally conformal K\"{a}hler manifold $(M, g, J=S^{2})$ (\cite{raz-dok}). Our purpose is to find geometrical properties of manifolds $(M, g, S)$ that satisfy analogues of Gray's curvature identities. Thus, we study a class of manifolds whose curvature tensors are invariant under $S$.

The paper is organized as follows. In Section~\ref{sec:1}, we give some necessary facts about a 4-dimensional Riemannian manifold $(M, g, S)$ obtained in \cite{dok-raz}. Also, we determine a class of manifolds $(M, g, S)$ whose curvature tensors are invariant under $S$. The following results in Section~\ref{sec:2} and Section~\ref{sec:3} refer to such manifolds. In Section~\ref{sec:2}, we express the Ricci tensor of $(M, g, S)$ by the metric $g$ and by the additional structure $S$. We establish that $(M, g, S)$ is an almost Einstein manifold and get a condition under which it is an Einstein manifold. We find the Ricci curvatures in the direction of the basis vectors $x, Sx, S^{2}x, S^{3}x$ in a tangent space $T_{p}M$.
 In Section~\ref{sec:3}, we obtain relations between the sectional curvatures of the $2$-planes determined by the $S$-basis $\{S^{3}x, S^{2}x, Sx, x\}$. Also, we get expressions of these sectional curvatures by the angles between vectors.

\section{Preliminaries}\label{sec:1}

Let $M$ be a $4$-dimensional differentiable manifold with a metric $g$. Let $S$ be a tensor of type $(1,1)$ on $M$, with local coordinates given by the right skew-circulant matrix
\begin{equation}\label{S}
    (S_{i}^{j})=\begin{pmatrix}
      0 & 1 & 0 & 0\\
      0 & 0 & 1 & 0 \\
      0 & 0 & 0 & 1\\
      -1 & 0 & 0 & 0\\
    \end{pmatrix}.
\end{equation}
Then
\begin{equation}\label{q4}
    S^{4}=-\id.
\end{equation}
We suppose that the structure $S$ of the manifold $M$ acts as an isometry with respect to $g$, i.e.,
\begin{equation}\label{2.1}
    g(Sx, Sy)=g(x, y).
\end{equation}
Anywhere in this work $x, y, z, u, w$ will stand for arbitrary elements of the algebra of the smooth vector fields on $M$ or vectors in the tangent space $T_{p}M$ at a point $p\in M$. The Einstein summation convention is used, the range of the summation indices being always $\{1, 2, 3, 4\}$.

The equalities \eqref{S} and \eqref{2.1} imply the following right skew-circulant matrix of components of $g$:
\begin{equation}\label{metricg}
    (g_{ij})=\begin{pmatrix}
      A & B & 0 & -B \\
      B & A & B & 0\\
      0 & B & A & B\\
      -B & 0 & B & A\\
    \end{pmatrix}.
\end{equation}
In \eqref{metricg}, functions $A$ and $B$ of four variables  $x^{1}$, $x^{2}$,
$x^{3}$ and $x^{4}$ are smooth on $M$.
It is supposed that $A >\sqrt{2} B >0$ in order $g$ to be positive definite. The manifold $(M, g, S)$ is introduced in \cite{dok-raz}.

On $(M, g, S)$ we consider an associated metric $\tilde{g}$ with $g$, determined by \begin{equation}\label{metricf}
 \tilde{g}(x, y)=g(x, Sy)+g(Sx, y).\end{equation}
 In \cite{dok-raz}, it is verified that $\tilde{g}$ is a necessary indefinite.
The matrix of its components is
\begin{equation}\label{g-tilde}
(\tilde{g}_{ij})=\begin{pmatrix}
      2B & A & 0 & -A \\
      A & 2B & A &0 \\
      0 & A & 2B & A\\
      -A & 0 & A & 2B\\
    \end{pmatrix}.
\end{equation}

Let $\nabla$ be the Riemannian connection of $g$. The curvature tensor $R$ of $\nabla$ is determined by
$R(x, y)z=\nabla_{x}\nabla_{y}z-\nabla_{y}\nabla_{x}z-\nabla_{[x,y]}z.$
The tensor of type $(0, 4)$  associated with $R$ is defined by
$R(x, y, z, u)=g(R(x, y)z,u).$

In \cite{dok-raz}, it is determined a class of manifolds $(M, g, S)$ which satisfy the identity
\begin{equation}\label{R1}
  R(x, y, Sz, Su)=R(x, y, z, u).
\end{equation}
 The above property defines a more general class of manifolds than this one with $\nabla S=0$.

On the other hand, equality \eqref{R1} and the symmetries of $R$ imply
\begin{equation}\label{R}
  R(Sx, Sy, Sz, Su)=R(x, y, z, u).
\end{equation}
Therefore, the class of manifolds $(M, g, S)$ with the condition \eqref{R} is more general than the class $(M, g, S)$ with the condition \eqref{R1}.
\begin{rem}
Every manifold $(M, g, S)$ is associated with an almost Hermitian manifold $(M, g, J)$, where $J=S^{2}$. Moreover, the manifold $(M, g, J)$ is locally conformal K\"{a}hler (\cite{raz-dok}).
For almost Hermitian manifolds, Grey's classification is valid (\cite{gray}). It is made with respect to the curvature tensor $R$. Due to this classification the class $\mathcal{L}_{3}$ consists of manifolds which satisfy the identity $R(Jx, Jy, Jz, Ju)=R(x, y, z, u)$.

It is easy to see that if $(M, g, S)$ has the property \eqref{R}, then the associated manifold $(M, g, J)$ belongs to $\mathcal{L}_{3}$.
\end{rem}

\section{Almost Einstein manifolds}\label{sec:2}

The Ricci tensor $\rho$ with respect to $g$ is given by the well-known formula
$\rho(y,z)=g^{ij}R(e_{i}, y, z, e_{j}).$
The scalar curvature $\tau$ with respect to $g$ and its associated quantity $\tau^{*}$ are determined by
    $\tau=g^{ij}\rho(e_{i}, e_{j}),\ \tau^{*}=\tilde{g}^{ij}\rho(e_{i}, e_{j}).$
Here $\{e_{i}\}$ is a local basis of $T_{p}M$, $g^{ij}$ and $\tilde{g}^{ij}$ are the components of the inverse matrices of $g$ and $\tilde{g}$, respectively.


By $R_{ijkh}$ and $\rho_{ij}$ we will denote the components of the curvature tensor $R$ and the components of the Ricci tensor $\rho$ with respect to $\{e_{i}\}$.
Hence, we establish the following propositions.
\begin{prop}\label{con-l1ll}
The property \eqref{R} of the curvature tensor $R$ of the manifold $(M, g, S)$ is equivalent to the conditions
\begin{align}\label{L2}\nonumber
     R_{1313}=&R_{2424},\ R_{1212}=R_{1414}=R_{2323}=R_{3434},\\
R_{1213}=&R_{2414}=R_{2423}=R_{2313}=R_{1334},\ R_{1324}=2R_{1234}=2R_{1423},\\\nonumber R_{1223}=&R_{1214}=R_{1434}=R_{2334},\ R_{1224}=R_{1413}=R_{2434}.
\end{align}
\end{prop}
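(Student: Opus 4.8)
The plan is to convert the tensorial identity \eqref{R} into linear relations among the components $R_{ijkh}$ by evaluating it on the basis $\{e_1,e_2,e_3,e_4\}$, and then to recognize the chains of \eqref{L2} as orbits of the permutation that $S$ induces on pairs of indices. From \eqref{S} one reads off $Se_1=e_2$, $Se_2=e_3$, $Se_3=e_4$, $Se_4=-e_1$; writing $Se_i=\eps_i e_{\sigma(i)}$ with $\sigma$ the cycle $1\to2\to3\to4\to1$ and $\eps_1=\eps_2=\eps_3=1$, $\eps_4=-1$, multilinearity turns \eqref{R} into
\begin{equation*}
R_{\sigma(i)\sigma(j)\sigma(k)\sigma(h)}=\eps_i\eps_j\eps_k\eps_h\,R_{ijkh}\qquad(\text{all }i,j,k,h).
\end{equation*}
Since \eqref{R} is multilinear in its four arguments, it holds for arbitrary $x,y,z,u$ exactly when it holds on the basis, so this system is fully equivalent to \eqref{R}; this settles both implications of the proposition simultaneously, and the only remaining task is to identify the resulting relations with \eqref{L2}.

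Next I would process these relations through the symmetries $R_{ijkh}=-R_{jikh}=-R_{ijhk}=R_{khij}$. It is cleanest to index components by the six unordered pairs $(12),(13),(14),(23),(24),(34)$ and to record how $\sigma$ permutes them: one checks that the induced action is the $4$-cycle $(12)\to(23)\to(34)\to(14)\to(12)$ together with the transposition $(13)\leftrightarrow(24)$, and — a point worth verifying carefully — that after the two antisymmetries are used to restore the canonical order within each pair, every accompanying factor $\eps_i\eps_j\eps_k\eps_h$ collapses to $+1$. Hence $R_{PQ}$ is constant along the orbits of this action on (unordered) pairs of pairs. The diagonal orbits $\{(12),(23),(34),(14)\}$ and $\{(13),(24)\}$ yield the two equalities on the first line of \eqref{L2}, while the length-four off-diagonal orbits yield the chains on the second and third lines; each chain is obtained by walking around one orbit and normalizing the index order with the symmetries of $R$.

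The one relation that is not a pure orbit statement is $R_{1324}=2R_{1234}=2R_{1423}$, and here the first Bianchi identity enters — the main and essentially only non-mechanical step. The pair $(13),(24)$ is fixed by the induced permutation, so \eqref{R} gives no direct constraint on $R_{1324}$; on the other hand the orbit $\{(12),(34)\}\leftrightarrow\{(14),(23)\}$ forces $R_{1234}=R_{1423}$. Substituting these into $R_{1234}+R_{1342}+R_{1423}=0$ and using $R_{1342}=-R_{1324}$ gives $R_{1324}=R_{1234}+R_{1423}=2R_{1234}$, completing \eqref{L2}.

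The main obstacle is organizational rather than conceptual: keeping the antisymmetry signs straight while canonicalizing the many index permutations, and matching each orbit representative to the precise member recorded in \eqref{L2}. Once the induced action on pairs and its triviality of sign are established, the diagonal and off-diagonal chains are immediate, and only the single Bianchi step requires genuine curvature input.
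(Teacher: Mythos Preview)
Your approach is exactly the paper's, only spelled out: the paper's proof simply writes the local form $R_{abcd}S_i^aS_j^bS_k^cS_h^d=R_{ijkh}$, invokes \eqref{S} and the first Bianchi identity, and asserts \eqref{L2}, with the converse equally brief. Your orbit bookkeeping on pairs of index-pairs, together with the sign check, is a clean way to execute this and the Bianchi step is handled just as in the paper.

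One caveat you should be aware of: your ``length-four off-diagonal orbits'' do not quite match \eqref{L2} as printed. The orbit of $\{(12),(13)\}$ has four elements and gives $R_{1213}=R_{2423}=R_{1334}=R_{2414}$, while the orbit of $\{(12),(24)\}$ gives $R_{1224}=R_{2313}=R_{2434}=R_{1413}$. The paper's display, however, places $R_{2313}$ in the \emph{first} of these chains (making it length five) and omits it from the last (leaving that one length three). Taken literally this would merge the two orbits and hence be strictly stronger than \eqref{R}; but the paper itself treats $R_{5}=R_{1213}$ and $R_{6}=R_{1224}$ as independent in the very next proposition, so this is evidently a misprint in the statement rather than a gap in your argument.
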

\begin{proof}
The local form of \eqref{R} is
\begin{equation}\label{localL1}
R_{abcd}S_{i}^{a}S_{j}^{b}S_{k}^{c}S_{h}^{d}=R_{ijkh}.\end{equation}
Then, using \eqref{S} and applying the Bianchi first identity to the components of $R$, we obtain \eqref{L2}.

Vice versa, from \eqref{S} and \eqref{L2} it follows \eqref{localL1}, so \eqref{R} holds true.
\end{proof}

\begin{prop}\label{the1}
If a manifold $(M, g, S)$ has the property \eqref{R}, then the components of the Ricci tensor $\rho$ satisfy
\begin{equation}\label{system-rho}
\rho_{11}=\rho_{22}=\rho_{33}=\rho_{44},\quad \rho_{12}=\rho_{23}=\rho_{34}=-\rho_{14},\quad \rho_{13}=\rho_{24}=0.
\end{equation}
\end{prop}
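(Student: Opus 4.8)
The plan is to compute the Ricci tensor components $\rho_{ij}$ directly from the definition $\rho(y,z)=g^{ij}R(e_i,y,z,e_j)$, and to show that the relations among the curvature components established in Proposition~\ref{con-l1ll} force the claimed relations \eqref{system-rho} among the $\rho_{ij}$. The key observation is that the inverse metric $g^{ij}$ is itself the inverse of a right skew-circulant matrix, hence again skew-circulant, so it has only a few distinct entries; combining this with the many coincidences in \eqref{L2} should collapse each $\rho_{ij}$ into a short sum of curvature components.

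First I would record the structure of $g^{ij}$. Since $(g_{ij})$ in \eqref{metricg} is a right skew-circulant matrix built from $A$ and $B$, its inverse has the same skew-circulant pattern, so I may write
\begin{equation*}
(g^{ij})=\begin{pmatrix}
 P & Q & 0 & -Q\\
 Q & P & Q & 0\\
 0 & Q & P & Q\\
 -Q & 0 & Q & P
\end{pmatrix}
\end{equation*}
for suitable functions $P,Q$ of the $A,B$ (the precise values are not needed for the relations we want, only the pattern, including the vanishing of the $(1,3)$ and $(2,4)$ off-diagonal entries). Then for each fixed pair $(y,z)=(e_a,e_b)$ the contraction $\rho_{ab}=g^{ij}R_{iabj}$ becomes a sum over the nonzero entries of $g^{ij}$, i.e. over the diagonal terms weighted by $P$ and the nearest-neighbour terms weighted by $\pm Q$.

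Next I would carry out this contraction for each needed index pair and then substitute \eqref{L2}. Concretely, I would compute $\rho_{11},\rho_{22},\rho_{33},\rho_{44}$ and check that, after using the diagonal curvature identities $R_{1313}=R_{2424}$ and $R_{1212}=R_{1414}=R_{2323}=R_{3434}$ together with the off-diagonal ones, the four diagonal Ricci components reduce to the same expression in $P,Q$ and a common set of curvature components; this gives $\rho_{11}=\rho_{22}=\rho_{33}=\rho_{44}$. I would then compute $\rho_{12},\rho_{23},\rho_{34}$ and $\rho_{14}$ and, using the blocks $R_{1213}=R_{2414}=R_{2423}=R_{2313}=R_{1334}$ and $R_{1223}=R_{1214}=R_{1434}=R_{2334}$, verify they agree up to the predicted sign, yielding $\rho_{12}=\rho_{23}=\rho_{34}=-\rho_{14}$. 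Finally, computing $\rho_{13}$ and $\rho_{24}$ and invoking the remaining identities (in particular the relation $R_{1324}=2R_{1234}=2R_{1423}$ coming from Bianchi, and the symmetries of $R$) should make the surviving terms cancel, giving $\rho_{13}=\rho_{24}=0$.

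The main obstacle I anticipate is purely bookkeeping: each $\rho_{ab}$ is a sum of up to four weighted curvature components with indices that must be normalized into the canonical form appearing in \eqref{L2} using the symmetries $R_{ijkh}=-R_{jikh}=-R_{ijhk}=R_{khij}$, and one must be careful with the signs contributed by the $-Q$ entries in $g^{ij}$ at positions $(1,4)$ and $(4,1)$. The conceptual content is that the skew-circulant symmetry of both $g$ and (via \eqref{R}) the curvature tensor is exactly enough to reduce the a priori ten independent Ricci components to the three-parameter family described by \eqref{system-rho}; once the contraction pattern is set up, the identities in \eqref{L2} match term by term, so the verification is direct rather than requiring any additional idea.
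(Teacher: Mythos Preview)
Your proposal is correct and follows essentially the same route as the paper: write the inverse metric in its skew-circulant pattern, contract $\rho_{ab}=g^{ij}R_{iabj}$, and simplify using the curvature identities \eqref{L2} from Proposition~\ref{con-l1ll}. The paper carries this out with the explicit values $P=\dfrac{A}{A^{2}-2B^{2}}$, $Q=\dfrac{-B}{A^{2}-2B^{2}}$ and records the resulting closed formulas for $\rho_{11}$ and $\rho_{12}$, but your observation that only the skew-circulant pattern of $g^{ij}$ (not the precise $P,Q$) is needed for the relations \eqref{system-rho} is a valid and mild streamlining of the same computation.
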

\begin{proof}
Due to Proposition~\ref{con-l1ll}, the components of $R$ satisfy \eqref{L2}.
For brevity, we denote
\begin{equation}\label{RV2}
\begin{split}
R_{1}=R_{1313},\  R_{2}=R_{1212},\ R_{3}=R_{1223},\ R_{4}=R_{1234},\  R_{5}=R_{1213},\ R_{6}=R_{1224}.
\end{split}
\end{equation}

The inverse matrix of $(g_{ij})$ is
\begin{equation}\label{g-obr}
    (g^{ij})=\frac{1}{A^{2}-2B^{2}}\begin{pmatrix}
      A & -B & 0 & B \\
      -B & A & -B & 0\\
      0 & -B & A & -B \\
      B & 0 & -B & A\\
    \end{pmatrix}.
\end{equation}
Now, having in mind \eqref{L2}, \eqref{RV2} and \eqref{g-obr}, we calculate the components of the Ricci tensor $\rho$ as follows:
  \begin{equation*}
  \begin{split}
      \rho_{11}&=\rho_{22}=\rho_{33}=\rho_{44}=\frac{1}{A^{2}-2B^{2}}\big(2B(R_{5}+R_{6})-A(2R_{2}+R_{1})\big),\\
    \rho_{12}&=\rho_{23}=\rho_{34}=-\rho_{14}=\frac{1}{A^{2}-2B^{2}}\big(B(2R_{3}-R_{2}+3R_{4})-A(R_{5}+R_{6})\big),\\
    \rho_{13}&=\rho_{24}=0.
    \end{split}
\end{equation*}
So the equalities \eqref{system-rho} are valid.
\end{proof}

A Riemannian manifold is said to be Einstein if its Ricci tensor $\rho$ is a constant multiple of the metric tensor $g$, i.e.
\begin{equation}\label{E}\rho(x, y) = \alpha g(x, y).\end{equation}

In \cite{dok-raz}, a Riemannian manifold $(M, g, S)$ is called
almost Einstein if its Ricci tensor $\rho$, the metrics $g$ and $\tilde{g}$ satisfy
\begin{equation}\label{AE}\rho(x, y) = \alpha g(x, y) + \beta \tilde{g}(x, y),\end{equation} where $\alpha$ and $\beta$ are smooth functions on $M$.

 \begin{thm}\label{prop5}
The manifold $(M, g, S)$ with property \eqref{R} is almost Einstein.
\end{thm}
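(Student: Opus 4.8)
The plan is to show that the Ricci tensor $\rho$, which has the special form established in Proposition~\ref{the1}, can be written as a linear combination of $g$ and $\tilde{g}$. The key observation is that both target tensors have very constrained component matrices: from \eqref{metricg} the matrix $(g_{ij})$ is built from the two scalar patterns carried by the entries $A$ (on the diagonal) and $B$ (on the skew-circulant off-diagonals), while from \eqref{g-tilde} the matrix $(\tilde{g}_{ij})$ carries $2B$ on the diagonal and $A$ on the corresponding off-diagonals. Meanwhile Proposition~\ref{the1} tells us that $(\rho_{ij})$ is itself a right skew-circulant matrix of exactly the same shape: a single scalar $\rho_{11}$ on the diagonal, a single scalar $\rho_{12}$ on the off-diagonal positions (with the sign pattern $\rho_{12}=\rho_{23}=\rho_{34}=-\rho_{14}$), and zeros in the $(1,3),(2,4)$ positions. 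So structurally $\rho$, $g$ and $\tilde{g}$ all live in the same two-dimensional space of matrices spanned by the ``diagonal'' pattern and the ``off-diagonal'' pattern.

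First I would record the two independent scalar quantities computed in Proposition~\ref{the1}, namely
\[
\rho_{11}=\frac{1}{A^{2}-2B^{2}}\big(2B(R_{5}+R_{6})-A(2R_{2}+R_{1})\big),\qquad
\rho_{12}=\frac{1}{A^{2}-2B^{2}}\big(B(2R_{3}-R_{2}+3R_{4})-A(R_{5}+R_{6})\big),
\]
and set up the requirement \eqref{AE} at the level of components. Matching the $(1,1)$ entry gives $\rho_{11}=\alpha A+\beta\,(2B)$, and matching the $(1,2)$ entry gives $\rho_{12}=\alpha B+\beta A$. Because all three matrices share the same zero pattern in the $(1,3)$ and $(2,4)$ slots and the same sign pattern throughout, these two scalar equations are the only independent constraints; every remaining component equation is an automatic consequence of the common skew-circulant structure. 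Thus verifying \eqref{AE} reduces to solving this $2\times2$ linear system for the two unknown functions $\alpha$ and $\beta$.

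Next I would solve the system. Its coefficient matrix is $\begin{pmatrix} A & 2B \\ B & A \end{pmatrix}$, whose determinant is $A^{2}-2B^{2}$, and this is exactly the quantity appearing in the denominators of $\rho_{11}$ and $\rho_{12}$ and is strictly positive by the standing assumption $A>\sqrt{2}B>0$. Hence the system is uniquely solvable, and Cramer's rule produces explicit smooth functions $\alpha$ and $\beta$ on $M$ (the factor $(A^{2}-2B^{2})$ in the denominator of the $\rho$-components cancels against the determinant, leaving rational expressions in $A$, $B$ and the curvature components $R_{1},\dots,R_{6}$ that are smooth wherever $g$ is positive definite). Since such $\alpha,\beta$ exist, \eqref{AE} holds and $(M,g,S)$ is almost Einstein by definition.

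I expect the only genuine point requiring care to be the bookkeeping that confirms the two component equations above really do capture \eqref{AE} in full: one must check that the off-diagonal sign pattern of $\tilde{g}$ (with $-A$ in the $(1,4)$ and $(4,1)$ positions) is compatible with the sign pattern $\rho_{14}=-\rho_{12}$ of the Ricci tensor, and likewise that the shared zeros in the $(1,3)$ and $(2,4)$ entries are respected. Once this structural alignment is verified, the nondegeneracy $A^{2}-2B^{2}>0$ does all the remaining work, so there is no analytic obstacle; the result is essentially a consequence of the fact that $g$, $\tilde{g}$ and $\rho$ all lie in the same two-parameter family of skew-circulant matrices.
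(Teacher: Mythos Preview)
Your argument is correct: the Ricci tensor, $g$, and $\tilde g$ all lie in the two-dimensional space of symmetric skew-circulant matrices of the displayed shape, and since the coefficient matrix $\begin{pmatrix} A & 2B\\ B & A\end{pmatrix}$ has determinant $A^{2}-2B^{2}>0$, the pair $(g,\tilde g)$ spans that space, so $\rho$ is a unique smooth combination of them.

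The paper proceeds a little differently. Rather than solving abstractly for $\alpha$ and $\beta$, it first computes the scalar curvature $\tau=g^{ij}\rho_{ij}$ and the associated scalar $\tau^{*}=\tilde g^{ij}\rho_{ij}$ from \eqref{system-rho}, \eqref{g-obr} and \eqref{f-obr}, and then checks directly that
\[
\rho_{ij}=\frac{\tau}{4}\,g_{ij}+\frac{\tau^{*}}{4}\,\tilde g_{ij}.
\]
So both proofs rest on the same structural observation, but the paper identifies the coefficients explicitly as $\alpha=\tau/4$ and $\beta=\tau^{*}/4$. Your route is cleaner for the bare statement of the theorem; the paper's route pays off immediately afterwards, since the explicit formula \eqref{rho51} is what drives Corollary~\ref{prop-enstein} (Einstein $\Rightarrow\tau^{*}=0$) and Theorem~\ref{th-ricci} (the Ricci curvatures along an $S$-basis). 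If you adopt your version, you would still need to recover the identification $\alpha=\tau/4$, $\beta=\tau^{*}/4$ before those later results.
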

\begin{proof}
The inverse matrix of $(\tilde{g}_{ij})$ is
\begin{equation}\label{f-obr}
    (\tilde{g}^{ij})=\frac{1}{2(A^{2}-2B^{2})}\begin{pmatrix}
      -2B & A & 0 & -A \\
      A & -2B & A &0 \\
      0 & A & -2B & A\\
      -A & 0 & A & -2B\\
    \end{pmatrix}.
\end{equation}
According to Proposition~\ref{the1}, for $(M, g, S)$ the equalities \eqref{system-rho} are valid.
Consequently, using \eqref{system-rho}, \eqref{g-obr} and \eqref{f-obr} we calculate the values of the scalar curvature $\tau$ and the associated quantity $\tau^{*}$ as follows:
  \begin{equation*}
    \tau =\frac{4}{A^{2}-2B^{2}}\big(A\rho_{11}-2B\rho_{12}\big),\quad
        \tau^{*} =\frac{4}{A^{2}-2B^{2}}\big(A\rho_{12}-B\rho_{11}\big).
\end{equation*}
Immediately from \eqref{system-rho} and the latter equalities we have
\begin{equation}\label{rho-tau}
    \rho_{11}=\frac{\tau}{4}A+\frac{2\tau^{*}}{4}B,\quad
     \rho_{12}=\frac{\tau}{4}B+\frac{\tau^{*}}{4}A,\quad \rho_{13}=\frac{\tau}{4}0+\frac{\tau^{*}}{4}0,
\end{equation}
and bearing in mind \eqref{metricg} and \eqref{g-tilde} we get
\begin{equation*}
    \rho_{11}=\frac{\tau}{4}g_{11}+\frac{\tau^{*}}{4}\tilde{g}_{11},\quad
     \rho_{12}=\frac{\tau}{4}g_{12}+\frac{\tau^{*}}{4}\tilde{g}_{12}, \quad
     \rho_{13}=\frac{\tau}{4}g_{13}+\frac{\tau^{*}}{4}\tilde{g}_{13}.
\end{equation*}
Then, taking into account \eqref{metricg}, \eqref{g-tilde}, \eqref{system-rho} and \eqref{rho-tau}, we obtain
\begin{equation*}
     \rho_{ij}=\frac{\tau}{4}g_{ij}+\frac{\tau^{*}}{4}\tilde{g}_{ij},
\end{equation*}
i.e.
\begin{equation}\label{rho51}
     \rho(x,y)=\frac{\tau}{4}g(x,y)+\frac{\tau^{*}}{4}\tilde{g}(x,y).
\end{equation}
Therefore, comparing \eqref{rho51} with \eqref{AE}, we state that $(M, g, S)$ is an almost Einstein manifold.
\end{proof}

Let $(M, g, S)$ satisfy the conditions of Theorem~\ref{prop5}. If we suppose that $(M, g, S)$ is an Einstein manifold, then its Ricci tensor $\rho$ has the form \eqref{E}. Hence \eqref{rho51} yields the following
\begin{cor}\label{prop-enstein}
If the manifold $(M, g, S)$ with property \eqref{R} is Einstein then
\begin{equation*}
    \tau^{*}=0.
 \end{equation*}
\end{cor}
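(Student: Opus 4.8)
The plan is to play the two available descriptions of the Ricci tensor against each other. Theorem~\ref{prop5} already supplies the almost Einstein identity \eqref{rho51}, so on a manifold satisfying \eqref{R} we always have $\rho = \frac{\tau}{4}g + \frac{\tau^{*}}{4}\tilde{g}$. If we now assume $(M,g,S)$ is Einstein, then \eqref{E} gives $\rho = \alpha g$, and subtracting the two expressions leaves the pointwise identity of symmetric $(0,2)$-tensors
\[
  \left(\alpha - \tfrac{\tau}{4}\right)g(x,y) = \tfrac{\tau^{*}}{4}\,\tilde{g}(x,y).
\]
Everything then reduces to showing that this can hold only when the coefficient $\tfrac{\tau^{*}}{4}$ vanishes.

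The cleanest way to finish is to observe that $g$ and $\tilde{g}$ are linearly independent as bilinear forms at each point: $g$ is positive definite because $A > \sqrt{2}B > 0$, while $\tilde{g}$ was noted to be indefinite right after \eqref{metricf}, and no nonzero scalar multiple of a definite form can be indefinite. Hence the displayed identity forces both coefficients to be zero, giving $\tau^{*} = 0$ (together with the expected $\alpha = \tfrac{\tau}{4}$). Alternatively, and more in the computational spirit of the earlier proofs, I would simply compare components: using $g_{11}=A$, $g_{12}=B$, $\tilde{g}_{11}=2B$, $\tilde{g}_{12}=A$ from \eqref{metricg} and \eqref{g-tilde} together with \eqref{rho-tau}, the Einstein condition $\rho_{ij}=\alpha g_{ij}$ yields the two scalar equations $\alpha A = \tfrac{\tau}{4}A + \tfrac{\tau^{*}}{2}B$ and $\alpha B = \tfrac{\tau}{4}B + \tfrac{\tau^{*}}{4}A$. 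Eliminating $\alpha$ and $\tau$ (multiply the first by $B$, the second by $A$, and subtract) collapses these to $\tfrac{\tau^{*}}{4}(A^{2}-2B^{2}) = 0$.

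There is no real obstacle here; the one point that must be used is the nondegeneracy/positivity condition $A^{2}-2B^{2} > 0$ --- equivalently the definiteness of $g$ against the indefiniteness of $\tilde{g}$ --- since this is exactly what rules out the two metrics being proportional and thereby forces $\tau^{*} = 0$. Without that inequality the conclusion would fail, so it is the hypothesis I would be careful to invoke.
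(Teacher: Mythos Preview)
Your argument is correct and follows exactly the paper's approach: compare the Einstein condition \eqref{E} with the almost Einstein identity \eqref{rho51} and read off $\tau^{*}=0$. The paper's own proof is the one terse sentence preceding the corollary; you have simply made explicit the point it leaves tacit, namely that $g$ and $\tilde{g}$ are linearly independent (via definiteness/indefiniteness, or equivalently via the component computation and $A^{2}-2B^{2}>0$).
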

\subsection{Ricci curvatures}
Now, we recall that the Ricci curvature in the direction of a nonzero vector $x$ is the value \begin{equation}\label{Ricicurv}
    r(x)=\frac{\rho(x,x)}{g(x,x)}.
\end{equation}

 Let $x$ be an arbitrary nonzero vector in a tangent space $T_{p}M$ of $(M, g, S)$.
 The vectors $x$, $Sx$, $S^{2}x$ and $S^{3}x$ determine six angles which belong to $(0, \pi)$.
For these angles, the next statements are verified in \cite{dok-raz}. Namely, if $\varphi=\angle(x, Sx)$ then \begin{equation}\label{ugli}
\begin{split}
 \angle(x, Sx)=&\angle(Sx, S^{2}x)=\angle(S^{2}x, S^{3}x)=\varphi,\quad \angle(x, S^{3}x)=\pi-\varphi,\\ \angle(x, S^{2}x)=&\angle(Sx, S^{3}x)=\frac{\pi}{2},
 \end{split}
\end{equation}
\begin{equation}\label{inequalities}
  \frac{\pi}{4}<\varphi<\frac{3\pi}{4}.
\end{equation}


A basis of type $\{S^{3}x, S^{2}x, Sx, x\}$  of $T_{p}M$ is called an $S$-\textit{basis}. In this case we say that \textit{the vector $x$ induces an $S$-basis of} $T_{p}M$.
 The existence of an orthogonal $S$-basis of $T_{p}M$ is based on \eqref{ugli} and \eqref{inequalities}.
 \begin{thm}\label{th-ricci}
Let $(M, g, S)$ be a manifold with property \eqref{R}. If a vector $x$ induces an $S$-basis, then the Ricci curvatures in the direction of the basis vectors are
\begin{equation}\label{ricc}
     r(x)=r(Sx)=r(S^{2}x)=r(S^{3}x)=\frac{\tau}{4}+\frac{\tau^{*}}{2}\cos\varphi,
\end{equation}
where $\angle(x, Sx)=\varphi$.
\end{thm}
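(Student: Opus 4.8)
The plan is to reduce everything to the almost-Einstein decomposition already established in Theorem~\ref{prop5}. By \eqref{rho51} we have $\rho(x,x)=\frac{\tau}{4}g(x,x)+\frac{\tau^{*}}{4}\tilde{g}(x,x)$, so dividing by $g(x,x)$ and recalling the definition \eqref{Ricicurv} of the Ricci curvature gives immediately
\begin{equation*}
r(x)=\frac{\tau}{4}+\frac{\tau^{*}}{4}\,\frac{\tilde{g}(x,x)}{g(x,x)}.
\end{equation*}
Thus the entire computation comes down to evaluating the ratio $\tilde{g}(x,x)/g(x,x)$, and the same reduction applies verbatim to the three remaining directions $Sx$, $S^{2}x$, $S^{3}x$.

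Next I would compute $\tilde{g}(x,x)$ from its definition \eqref{metricf}. Since $g$ is symmetric, $\tilde{g}(x,x)=g(x,Sx)+g(Sx,x)=2g(x,Sx)$. Because $S$ is an isometry by \eqref{2.1}, the vectors $x$ and $Sx$ have equal length, so $g(x,Sx)=g(x,x)\cos\varphi$ where $\varphi=\angle(x,Sx)$. Hence $\tilde{g}(x,x)=2g(x,x)\cos\varphi$, the ratio equals $2\cos\varphi$, and we obtain $r(x)=\frac{\tau}{4}+\frac{\tau^{*}}{2}\cos\varphi$ as claimed.

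For the remaining three directions the argument repeats, using the isometry property to keep all lengths equal to that of $x$ and invoking the angle relations \eqref{ugli}. For $Sx$ we get $\tilde{g}(Sx,Sx)=2g(Sx,S^{2}x)$ with $\angle(Sx,S^{2}x)=\varphi$, and similarly for $S^{2}x$. The one case deserving care is $S^{3}x$, where $\tilde{g}(S^{3}x,S^{3}x)=2g(S^{3}x,S^{4}x)$; here I would substitute $S^{4}x=-x$ from \eqref{q4} and combine it with $\angle(x,S^{3}x)=\pi-\varphi$ from \eqref{ugli}, so that the two sign changes, one from $S^{4}=-\id$ and one from $\cos(\pi-\varphi)=-\cos\varphi$, cancel and again produce $2g(x,x)\cos\varphi$. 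This sign bookkeeping in the $S^{3}x$ direction is the only delicate point; the rest is a direct substitution.
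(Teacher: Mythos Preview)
Your proof is correct and follows essentially the same route as the paper: both invoke the almost-Einstein decomposition \eqref{rho51}, compute $\tilde{g}(x,x)=2g(x,Sx)=2g(x,x)\cos\varphi$, and divide by $g(x,x)$. The only cosmetic difference is that the paper disposes of all four directions simultaneously by noting that $g$ and $\tilde{g}$ are both $S$-invariant (so $\rho(S^{k}x,S^{k}x)$ is independent of $k$), whereas you verify each direction separately with the explicit sign check at $k=3$; either way works.
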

\begin{proof}
 In the course of the proof of Theorem~\ref{prop5}, we find that $\rho$ is given by \eqref{rho51}. Therefore, using \eqref{2.1} and \eqref{metricf}, we obtain
\begin{equation}\label{rhoL2}
\begin{split}
    \rho(x, x)=\rho(Sx, Sx)=\rho(S^{2}x, S^{2}x)&=\rho(S^{3}x, S^{3}x)=\frac{\tau}{4}g(x,x)+\frac{\tau^{*}}{4}\tilde{g}(x,x).
    \end{split}
\end{equation}

Let a vector $x$ induce an $S$-basis. The equalities \eqref{2.1} and \eqref{metricf} imply
\begin{equation*}
\begin{split}
g(x, Sx)=g(x, x)\cos\varphi,\qquad \tilde{g}(x, x)=2g(x, x)\cos\varphi,
  \end{split}
 \end{equation*}
and using \eqref{Ricicurv} and \eqref{rhoL2} we find \eqref{ricc}.
\end{proof}

\begin{cor}
Let $(M, g, S)$ with \eqref{R} be an Einstein manifold. If a vector $x$ induces an $S$-basis, then the Ricci curvatures in the direction of the basis vectors are
\begin{equation*}
     r(x)=r(Sx)=r(S^{2}x)=r(S^{3}x)=\frac{\tau}{4}.
\end{equation*}
\end{cor}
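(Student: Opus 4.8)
The plan is to combine the two earlier results that already contain all the content: Theorem~\ref{th-ricci}, which computes the Ricci curvatures along an $S$-basis, and Corollary~\ref{prop-enstein}, which pins down the associated quantity $\tau^{*}$ once the Einstein condition is imposed.

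First I would note that the hypotheses here are exactly those needed for both results: the manifold satisfies \eqref{R} and the vector $x$ induces an $S$-basis. Hence formula \eqref{ricc} of Theorem~\ref{th-ricci} applies verbatim and gives
\[
r(x)=r(Sx)=r(S^{2}x)=r(S^{3}x)=\frac{\tau}{4}+\frac{\tau^{*}}{2}\cos\varphi .
\]
Next I would invoke the Einstein hypothesis: by Corollary~\ref{prop-enstein}, a manifold $(M, g, S)$ with property \eqref{R} that is Einstein satisfies $\tau^{*}=0$. Substituting $\tau^{*}=0$ into the displayed equality annihilates the second summand, independently of the angle $\varphi$, and the stated value $\tau/4$ follows at once.

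As an independent cross-check, and in fact a more direct route, I would argue straight from the definitions. For an Einstein manifold $\rho=\alpha g$, so \eqref{Ricicurv} gives $r(x)=\alpha$ for every nonzero $x$; taking the $g$-trace of $\rho=\alpha g$ and using $\dim M=4$ forces $\alpha=\tau/4$, which reproduces the claimed value with no reference to the $S$-basis at all. I expect no genuine obstacle here: the substantial geometry was carried out in Theorem~\ref{prop5}, Theorem~\ref{th-ricci} and Corollary~\ref{prop-enstein}, so this corollary reduces to a one-line specialization obtained by setting $\tau^{*}=0$.
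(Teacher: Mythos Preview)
Your proposal is correct and matches the paper's own proof exactly: the paper simply substitutes $\tau^{*}=0$ (from Corollary~\ref{prop-enstein}) into formula~\eqref{ricc} of Theorem~\ref{th-ricci}. Your additional cross-check via $\rho=\alpha g$ and a trace is valid but goes beyond what the paper does.
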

\begin{proof}
The above equalities follow directly by substituting $\tau^{*}=0$ into \eqref{ricc}.
\end{proof}

\subsection{An example}

 Examples of Einstein and almost Einstein manifolds $(M, g, S)$ with condition \eqref{R1} are constructed on Lie groups in \cite{dok-raz}. Another example of an Einstein manifold $(M, g, S)$ is constructed on a Lie group in \cite{raz-dok}. Since this example is not specified as a manifold with identity \eqref{R}, we will now focus only on this property of it.

Let $G$ be a $4$-dimensional real connected Lie group. Let $\mathfrak{g}$ be the corresponding Lie algebra with a basis $\{e_{1}, e_{2},e_{3},e_{4}\}$ of left invariant vector fields. We introduce a skew-circulant structure $S$ and a metric $g$ as follows:
\begin{eqnarray*}
&Se_{1}=-e_{4},\quad Se_{2}=e_{1},\quad Se_{3}=e_{2},\quad Se_{4}=e_{3},\label{lie} \\
&g(e_{i}, e_{j})= \delta_{ij},\label{g}
\end{eqnarray*}
where $\delta_{ij}$ is the Kronecker delta.

The used basis $\{e_i\}$ is an orthonormal $S$-basis. The equalities \eqref{q4} and \eqref{2.1} are valid and $(g, S)$ is a structure of the considered type. We denote the corresponding manifold by $(G, g, S)$.

The real four-dimensional indecomposable Lie algebras are classified by Mubarak\-zyanov (\cite{mub}). More accessible in the libraries is the paper \cite{Biggs} and the references therein.
We consider the class $\{\mathfrak{g}_{4,5}\}$, which represents an indecomposable Lie algebra,
depending on two real parameters $a$ and $b$.

According to the definition of the class $\{\mathfrak{g}_{4,5}\}$, we have that the nonzero brackets are as follows (\cite{Biggs}):
\begin{equation*}
  [e_{1}, e_{4}]=e_{1},\quad [e_{2}, e_{4}]=ae_{2},\quad [e_{3}, e_{4}]=be_{3},\quad
  -1\leq b \leq a \leq 1,\ ab\neq 0.
\end{equation*}
The components of $\nabla$ are
\begin{equation*}
\begin{array}{lll}
    \nabla_{e_{1}}e_{1}=-e_{4},\quad
&    \nabla_{e_{1}}e_{4}=e_{1},\quad
&    \nabla_{e_{2}}e_{2}=-ae_{4},\\
   \nabla_{e_{2}}e_{4}=ae_{2},\quad
&    \nabla_{e_{3}}e_{4}=be_{3},\quad
&    \nabla_{e_{3}}e_{3}=-be_{4}.
\end{array}
\end{equation*}

In \cite{raz-dok}, for a manifold $(G, g, S)$ with a Lie algebra $\mathfrak{g}$ from the class $\{\mathfrak{g}_{4, 5}\}$, we get that if $a=b=1$ then

i) the nonzero components of the curvature tensor $R$ and of the Ricci tensor $\rho$ with respect to $\{e_i\}$, are as follows:
\begin{align}\label{rlamda}
  R_{1212}=R_{1414}=R_{2323}=R_{3434}=R_{1313}=R_{2424}=1,
\end{align}
\begin{equation*}
    \rho_{11}=\rho_{22}=\rho_{33}=\rho_{44}=-3.
\end{equation*}

 ii) $(G, g, S)$ is a non-flat Einstein manifold with a negative scalar curvature $\tau=-12$.

Bearing in mind the above results we state the following
\begin{prop}
 Let $(G, g, S)$ be a manifold with a Lie algebra $\mathfrak{g}$ from the class $\{\mathfrak{g}_{4, 5}\}$. If $a=b=1$ are valid, then $(G, g, S)$ satisfies the property \eqref{R}.
\end{prop}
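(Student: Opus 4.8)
The plan is to lean entirely on Proposition~\ref{con-l1ll}, which reduces the coordinate-free identity \eqref{R} to the explicit system of component equalities \eqref{L2}. Since the curvature components of $(G, g, S)$ for $a=b=1$ have already been computed in the cited result from \cite{raz-dok} and recorded in \eqref{rlamda}, the whole task collapses to substituting those values into \eqref{L2} and checking that each equality holds. No new computation of the connection or the curvature is needed.

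First I would record the curvature data precisely: by part i) the only components that survive (up to the symmetries of $R$) are the six listed in \eqref{rlamda}, all equal to $1$, namely $R_{1212}$, $R_{1414}$, $R_{2323}$, $R_{3434}$, $R_{1313}$, $R_{2424}$; every component whose index pattern cannot be obtained from these by the symmetries of $R$ is zero. Then I would run through \eqref{L2} group by group. The first two lines involve only $R_{1313}=R_{2424}$ and $R_{1212}=R_{1414}=R_{2323}=R_{3434}$, and these hold at once because all the components listed there equal $1$. The remaining four groups of equalities involve only components of the mixed type $R_{1213}$, $R_{1324}$, $R_{1234}$, $R_{1423}$, $R_{1223}$, $R_{1224}$, and so on; none of these appears in the nonzero list, so each vanishes and every one of those equalities reduces to a triviality such as $0=0$ or $0=2\cdot 0$.

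The only point that needs genuine care -- and the closest thing to an obstacle -- is the bookkeeping: I must confirm that no index pattern occurring in the last four groups of \eqref{L2} secretly coincides, through the antisymmetries $R_{ijkh}=-R_{jikh}=-R_{ijhk}$ and the pair symmetry $R_{ijkh}=R_{khij}$, with one of the six surviving patterns in \eqref{rlamda}. A short inspection settles this: all six nonzero components are of the ``diagonal'' block form $R_{ijij}$ (the same unordered pair $\{i,j\}$ in both slots), whereas every component appearing in the last four lines of \eqref{L2} mixes two distinct unordered pairs (for instance $R_{1213}$ pairs $\{1,2\}$ with $\{1,3\}$, and $R_{1324}$ pairs $\{1,3\}$ with $\{2,4\}$), so none of them can be identified with a $1$. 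Once this matching is ruled out, all the equalities in \eqref{L2} are verified, and Proposition~\ref{con-l1ll} then yields \eqref{R} for $(G, g, S)$, completing the proof.
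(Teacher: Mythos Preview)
Your proposal is correct and follows exactly the paper's approach: verify that the curvature components \eqref{rlamda} satisfy the system \eqref{L2}, and then invoke Proposition~\ref{con-l1ll} to conclude \eqref{R}. The paper's proof is a single sentence to this effect; your version simply spells out the bookkeeping in more detail.
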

\begin{proof}
Directly from \eqref{rlamda} we get that the components of the curvature tensor satisfy \eqref{L2}.
\end{proof}
So we obtain an example of an Einstein manifold with the property \eqref{R}. Also, this manifold does not satisfy \eqref{R1}.

\section{Some sectional curvatures}\label{sec:3}

If $\{x, y\}$ is a non-degenerate $2$-plane spanned by vectors $x, y$ in $T_{p}M$, then its sectional curvature is
\begin{equation}\label{3.3}
    k(x,y)=\frac{R(x, y, x, y)}{g(x, x)g(y, y)-g^{2}(x, y)}.
\end{equation}
Let a vector $x$ induce an $S$-basis in $T_{p}M$ of $(M,  g, S)$. There are determined six 2-planes $\{x, Sx\}$, $\{x, S^{2}x\}$, $\{x, S^{3}x\}$, $\{Sx, S^{2}x\}$, $\{Sx, S^{3}x\}$ and $\{S^{2}x, S^{3}x\}$ of $T_{p}M$. For the angles between the pairs of vectors equalities \eqref{ugli} are valid. Moreover, the angle $\varphi=\angle(x, Sx)$ satisfies \eqref{inequalities}.

In the next theorems we establish relations among the sectional curvatures of the 2-planes generated by an $S$-basis.
\begin{thm}\label{predl}
Let $(M, g, S)$ be a manifold with property \eqref{R}. If a vector $x$ induces an $S$-basis, then for the sectional curvatures of the basic $2$-planes we have
\begin{equation}\label{3.21}
k(x,Sx)=k(Sx,S^{2}x)=k(S^{2}x,S^{3}x)=k(S^{3}x,x),
\end{equation}
\begin{equation}\label{3.31}
k(x,S^{2}x)=k(Sx,S^{3}x).
\end{equation}
\end{thm}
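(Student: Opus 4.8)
The plan is to read both equalities straight off the defining formula \eqref{3.3}, treating the numerator $R(\cdot,\cdot,\cdot,\cdot)$ and the denominator $g(\cdot,\cdot)g(\cdot,\cdot)-g^{2}(\cdot,\cdot)$ separately. I would match the numerators using the curvature invariance \eqref{R} and the denominators using the isometry property \eqref{2.1}, invoking $S^{4}=-\id$ from \eqref{q4} only to close the cyclic chain.

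For the chain \eqref{3.21}, applying $S$ to all four arguments and using \eqref{R} gives
\begin{equation*}
\begin{split}
R(Sx,S^{2}x,Sx,S^{2}x)&=R(x,Sx,x,Sx),\\
R(S^{2}x,S^{3}x,S^{2}x,S^{3}x)&=R(Sx,S^{2}x,Sx,S^{2}x),
\end{split}
\end{equation*}
so the numerators of $k(x,Sx)$, $k(Sx,S^{2}x)$ and $k(S^{2}x,S^{3}x)$ coincide. Repeated use of \eqref{2.1} gives $g(S^{k}x,S^{k}x)=g(x,x)$ and $g(S^{i}x,S^{j}x)=g(S^{i-1}x,S^{j-1}x)$, so each of the corresponding Gram determinants equals $g(x,x)^{2}-g^{2}(x,Sx)$. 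This yields the first three equalities of \eqref{3.21}.

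To close the chain at $k(S^{3}x,x)$, I would pass through the pair $(S^{3}x,S^{4}x)$. Since $S^{4}x=-x$ by \eqref{q4}, and both the curvature and the Gram determinant are even in each vector slot, $k(S^{3}x,S^{4}x)=k(S^{3}x,x)$. Here \eqref{R} gives $R(S^{3}x,S^{4}x,S^{3}x,S^{4}x)=R(S^{2}x,S^{3}x,S^{2}x,S^{3}x)$, while the denominators match via $g(S^{3}x,x)=g(S^{4}x,Sx)=-g(x,Sx)$, so $k(S^{2}x,S^{3}x)=k(S^{3}x,x)$ and \eqref{3.21} is complete. The equality \eqref{3.31} is obtained the same way by applying $S$ once to $(x,S^{2}x)$: \eqref{R} yields $R(Sx,S^{3}x,Sx,S^{3}x)=R(x,S^{2}x,x,S^{2}x)$, and since $\angle(x,S^{2}x)=\angle(Sx,S^{3}x)=\pi/2$ by \eqref{ugli} both Gram determinants reduce to $g(x,x)^{2}$.

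I expect no genuine obstacle here; the only point requiring attention is the sign bookkeeping forced by the wraparound $S^{4}=-\id$, but because both numerator and denominator in \eqref{3.3} are even under negating either argument, these signs cancel. The theorem is thus a direct consequence of the two invariance properties \eqref{R} and \eqref{2.1}.
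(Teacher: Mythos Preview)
Your proposal is correct and follows essentially the same route as the paper: iterate the invariance \eqref{R} to match the numerators $R(S^{i}x,S^{i+1}x,S^{i}x,S^{i+1}x)$ (resp.\ $R(S^{i}x,S^{i+2}x,S^{i}x,S^{i+2}x)$), use the isometry \eqref{2.1} to match the Gram determinants, and absorb the sign from $S^{4}=-\id$ via the evenness of both numerator and denominator. The paper packages the first step as the single chain \eqref{Rv1} and its specializations \eqref{slRv1}, \eqref{dop4}, but the content is identical to what you wrote.
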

\begin{proof}
From (\ref{R}) we obtain
\begin{equation}\label{Rv1}
\begin{split}
R(x,y,z,u)=&R(Sx,Sy,Sz,Su)\\=&R(S^{2}x,S^{2}y,S^{2}z,S^{2}u)=R(S^{3}x,S^{3}y,S^{3}z,S^{3}u).
\end{split}
\end{equation}
In \eqref{Rv1} we substitute

1) $Sx$ for $y$, $x$ for $z$, $Sx$ for $u$, and we get
 \begin{equation}\label{slRv1}
 \begin{split}
 R(x,Sx,x,Sx)=&R(Sx,S^{2}x,Sx,S^{2}x )\\=&R(S^{2}x,S^{3}x,S^{2}x,S^{3}x)=R(S^{3}x,x,S^{3}x,x).
 \end{split}
 \end{equation}
2) $S^{2}x$ for $y$, $x$ for $z$, $S^{2}x$ for $u$, and we find
 \begin{equation}\label{dop4}
 R(x,S^{2}x,x,S^{2}x)=R(Sx,S^{3}x,Sx,S^{3}x).
 \end{equation}
 The equality \eqref{3.21} follows from \eqref{2.1}, \eqref{3.3} and \eqref{slRv1}. In a similar way, from \eqref{2.1}, \eqref{3.3} and \eqref{dop4} we obtain \eqref{3.31}.
\end{proof}

Due to Theorem~\ref{predl} there are only two different sectional curvatures determined by an $S$-basis. Thus, we consider only the sectional curvatures of the type $k(x, Sx)$ and $k(x, S^{2}x)$.
\begin{thm}\label{th4}
Let $(M, g, S)$ be a manifold with property \eqref{R}. If vectors $x$ and $u$ induce $S$-bases and $\{S^{3}x, S^{2}x, Sx, x\}$ is an orthonormal basis, then the sectional curvatures satisfy
\begin{equation}\label{mu+mu}
    \begin{split}
   2(1-&\cos^{2}\varphi)k(u,Su)+k(u,S^{2}u)=k(x,S^{2}x) +2(1-\cos^{2}\varphi)k(x,Sx)\\&+\cos^{2}\varphi \big(4R(x, Sx, Sx, S^{2}x)+6R(x, Sx, S^{2}x, S^{3}x)\big)\\&+4\cos\varphi\big(R(x, Sx, x, S^{2}x)+R(x, Sx, Sx, S^{3}x)\big).
    \end{split}
\end{equation}
where $\angle(u, Su)=\varphi$.
\end{thm}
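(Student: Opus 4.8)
The plan is to reduce both sides to explicit curvature components in the given orthonormal frame and then verify a single homogeneous quartic identity in the coordinates of $u$.

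First I would simplify the left-hand side. Since $S$ is an isometry by \eqref{2.1}, one has $g(Su,Su)=g(S^2u,S^2u)=g(u,u)$, while the angle relations \eqref{ugli} applied to $u$ give $g(u,S^2u)=0$ and $g(u,Su)=g(u,u)\cos\varphi$. Hence, by \eqref{3.3},
\[
k(u,Su)=\frac{R(u,Su,u,Su)}{g(u,u)^2(1-\cos^2\varphi)},\qquad k(u,S^2u)=\frac{R(u,S^2u,u,S^2u)}{g(u,u)^2},
\]
so the factor $2(1-\cos^2\varphi)$ cancels and the left-hand side collapses to $\bigl(2R(u,Su,u,Su)+R(u,S^2u,u,S^2u)\bigr)/g(u,u)^2$. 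Because $\{x,Sx,S^2x,S^3x\}$ is orthonormal, \eqref{3.3} also turns the first two terms on the right into the plain components $k(x,S^2x)=R(x,S^2x,x,S^2x)$ and $k(x,Sx)=R(x,Sx,x,Sx)$.

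Next I would introduce coordinates. Writing $u=a_1x+a_2Sx+a_3S^2x+a_4S^3x$, the relation $S^4=-\id$ produces the coordinates of $Su$ and $S^2u$ in the same frame, and $g(u,u)=a_1^2+a_2^2+a_3^2+a_4^2=:N$, $g(u,Su)=a_1a_2+a_2a_3+a_3a_4-a_1a_4=:C$, so that $\cos\varphi=C/N$. In this frame $S$ has the companion form \eqref{S}, so by Proposition~\ref{con-l1ll} every component of $R$ reduces to the six scalars of \eqref{RV2}; in particular $R(x,S^2x,x,S^2x)=R_1$, $R(x,Sx,x,Sx)=R_2$, $R(x,Sx,Sx,S^2x)=R_3$, $R(x,Sx,S^2x,S^3x)=R_4$, $R(x,Sx,x,S^2x)=R_5$, $R(x,Sx,Sx,S^3x)=R_6$. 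The core step is then to expand $R(u,Su,u,Su)$ and $R(u,S^2u,u,S^2u)$ by multilinearity (conveniently through the bivector entries $W_{ab}=u_a(Su)_b-u_b(Su)_a$ and their analogues for $S^2u$), replace every curvature component by its value in $\{R_1,\dots,R_6\}$, and collect. What must emerge is
\[
2R(u,Su,u,Su)+R(u,S^2u,u,S^2u)=N^2R_1+2(N^2-C^2)R_2+4C^2R_3+6C^2R_4+4NC(R_5+R_6),
\]
since dividing by $N^2=g(u,u)^2$ and inserting $\cos\varphi=C/N$ reproduces \eqref{mu+mu}. I would match the coefficient of each $R_i$ as a quartic in $a_1,\dots,a_4$: that of $R_1$ is $N^2$, of $R_2$ is $2(N^2-C^2)$, and of $R_3,R_4$ are $4C^2,6C^2$.

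The main obstacle is the pair of mixed coefficients, those of $R_5$ and $R_6$: they are the only ones linear in $C$, and the assertion is that each separately equals $4NC$. This hinges on reducing every mixed curvature entry correctly under the invariance \eqref{R}; the delicate point is that a term such as $R(Sx,S^2x,x,S^2x)$ lies in the same class as $R(x,Sx,Sx,S^3x)$ (the $R_6$-class), rather than in the $R_5$-class. It is exactly this membership that forces the $R_5$- and $R_6$-coefficients to coincide; a single misassignment leaves a residue proportional to $(R_5-R_6)$ times a quartic that is \emph{not} a function of $\varphi$ alone, which would contradict the statement. Since an error here stays invisible until the final comparison, I would first check the identity on a test vector such as $u=x+Sx$ (where $\cos\varphi=\tfrac12$) and only then carry out the general bookkeeping.
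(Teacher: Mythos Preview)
Your plan is correct and is essentially the paper's own argument: expand $u$ in the orthonormal $S$-basis, use the $S$-invariance \eqref{R} to reduce every curvature component to the six scalars $R_1,\dots,R_6$, and verify that $2R(u,Su,u,Su)+R(u,S^2u,u,S^2u)$ collapses to $N^2R_1+2(N^2-C^2)R_2+4C^2R_3+6C^2R_4+4NC(R_5+R_6)$, which after dividing by $N^2$ gives \eqref{mu+mu}. The only cosmetic differences are that the paper normalizes $u$ to a unit vector and records the needed reductions as the coordinate-free identities \eqref{dop3}--\eqref{dop8} rather than appealing to Proposition~\ref{con-l1ll}; your remark that the $R_5$- and $R_6$-coefficients each equal $4NC$ (and hence depend on $u$ only through $\varphi$) is exactly the point that makes the combination $2R(u,Su,u,Su)+R(u,S^2u,u,S^2u)$ work.
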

\begin{proof}
In \eqref{Rv1} we substitute

    1)\ $Sx$ for $y$, $S^{2}x$ for $z$ and $x$ for $u$, and we get
    \begin{equation}\label{dop3}
    \begin{split}
    R(x, Sx, S^{2}x, x)=&R(Sx, S^{2}x, S^{3}x, Sx)\\=&-R(S^{2}x, S^{3}x, x, S^{2}x)=R(S^{3}x, x, Sx, S^{3}x),
    \end{split}
    \end{equation}
    2)\  $Sx$ for $y$, $Sx$ for $z$ and $S^{2}x$ for $u$, and we have
    \begin{equation}\label{dop5}
   \begin{split}
    R(x, Sx, Sx, S^{2}x)=&R(Sx, S^{2}x, S^{2}x, S^{3}x)\\=&-R(S^{2}x, S^{3}x,S^{3} x, x)=-R(S^{3}x, x, x, Sx),
    \end{split}
    \end{equation}
    3)\  $Sx$ for $y$, $S^{2}x$ for $z$ and $S^{3}x$ for $u$, and we find
    \begin{equation}\label{dop6}
  \begin{split}
    R(x, Sx,  S^{2}x, S^{3}x)=&-R(Sx, S^{2}x, S^{3}x, x)\\=&R(S^{2}x, S^{3}x, x, Sx)=-R(S^{3}x, x, Sx,  S^{2}x),
    \end{split}
    \end{equation}
    4)\ $Sx$ for $y$, $Sx$ for $z$ and $S^{3}x$ for $u$, and we get
    \begin{equation}\label{dop7}
     \begin{split}
    R(x, Sx,  Sx, S^{3}x)=&-R(Sx, S^{2}x, S^{2}x, x)\\=&-R(S^{2}x, S^{3}x, S^{3}x, Sx)=-R(S^{3}x, x, x,  S^{2}x),
    \end{split}
    \end{equation}
    5)\  $S^{2}x$ for $y$, $Sx$ for $z$ and $S^{3}x$ for $u$, and we have
    \begin{equation}\label{dop8}
     \begin{split}
    R(x, S^{2}x,  Sx, S^{3}x)=&-R(Sx, S^{3}x, S^{2}x, x)\\=&R(S^{2}x, x, S^{3}x, Sx)=-R(S^{3}x, Sx, x,  S^{2}x).
    \end{split}
    \end{equation}

 Let $u=\alpha S^{3}x+\beta S^{2}x +\gamma S x+\delta x$, where $\alpha,\beta,\gamma, \delta \in \mathbb{R}$. From \eqref{S} we get $Su=\beta S^{3}x+\gamma S^{2}x +\delta Sx - \alpha x$, $S^{2}u=\gamma S^{3}x+\delta S^{2}x -\alpha Sx -\beta x$ and $S^{3}u= \delta S^{3}x-\alpha S^{2}x -\beta Sx - \gamma x$.
 Then, by using the linear properties of the curvature tensor $R$ and having in mind \eqref{Rv1}, \eqref{slRv1}, \eqref{dop4}, \eqref{dop3} -- \eqref{dop8}, we obtain
 \begin{align*}
    R(u,Su,u,Su)&=(\alpha^{2}+\gamma^{2})(\beta^{2}+\delta^{2})R_{1}\\&+\Big((\alpha^{2}+\gamma^{2})^{2}+(\beta^{2}+\delta^{2})^{2}+2\delta\beta(\alpha^{2}-\gamma^{2})+2\alpha\gamma(\delta^{2}-\beta^{2})\Big)R_{2}\\&+2(\alpha^{2}+\gamma^{2})(\beta^{2}+\delta^{2})R_{3}\\&+6\Big(\delta\beta(\gamma^{2}-\alpha^{2})+\alpha\gamma(\beta^{2}-\delta^{2})\Big)R_{4}\\&+2\Big((\beta\gamma-\alpha\delta)(\beta^{2}+\delta^{2})+(\alpha\beta+\delta\gamma)(\alpha^{2}+\gamma^{2})\Big)R_{5}\\&+2\Big((\alpha\beta+\gamma\delta)(\delta^{2}+\beta^{2})+(\beta\gamma-\delta\alpha)(\alpha^{2}+\gamma^{2})\Big)R_{6},\\ R(u,S^{2}u,u,S^{2}u)&=\Big((\alpha^{2}+\gamma^{2})^{2}+(\beta^{2}+\delta^{2})^{2}\Big)R_{1}\\&+2(\alpha^{2}+\gamma^{2})(\beta^{2}+\delta^{2})R_{2}\\&+8\Big(\delta\beta(\gamma^{2}-\alpha^{2})+\alpha\gamma(\beta^{2}-\delta^{2})\Big)R_{3}\\&+6(\alpha^{2}+\gamma^{2})(\beta^{2}+\delta^{2})R_{4}\\&+4\Big((\alpha\beta+\gamma\delta)(\delta^{2}+\beta^{2})+(\beta\gamma-\delta\alpha)(\alpha^{2}+\gamma^{2}))R_{5}\\&+4\Big((\beta\gamma-\alpha\delta)(\beta^{2}+\delta^{2})+(\alpha\beta+\delta\gamma)(\alpha^{2}+\gamma^{2})\Big)R_{6},
\end{align*}
where
\begin{equation}\label{r1-6}
    \begin{array}{lll}
R_{1}=R(x, S^{2}x, x, S^{2}x), & R_{2}=R(x, Sx, x, Sx), & R_{3}=R(x, Sx, Sx, S^{2}x),\\
R_{4}=R(x, Sx, S^{2}x, S^{3}x),& R_{5}=R(x, Sx, x, S^{2}x),& R_{6}=R(x, Sx, Sx, S^{3}x).
\end{array}
\end{equation}
Now we get the sum
\begin{equation}\label{r+k}
    \begin{split}
   2R(u,Su,u,Su)&+R(u,S^{2}u,u,S^{2}u) =(\alpha^{2}+\beta^{2}+\gamma^{2}+\delta^{2})^{2}R_{1}\\&+2\big((\alpha^{2}+\beta^{2}+\gamma^{2}+\delta^{2})^{2}-(\alpha\beta+\beta\gamma+\delta\gamma-\alpha\delta)^{2}\big) R_{2}\\&+(\alpha\beta+\beta\gamma+\delta\gamma-\alpha\delta)^{2}(4R_{3}+ 6R_{4})\\&+ 4(\alpha\beta+\beta\gamma+\delta\gamma-\alpha\delta)(R_{5}+ R_{6}).
    \end{split}
\end{equation}

Since the basis $\{S^{3}x, S^{2}x, Sx, x\}$ is orthonormal, we have
\begin{equation*}
    g(u, u)= g(Su, Su)=\alpha^{2}+\beta^{2}+\gamma^{2}+\delta^{2},
     \end{equation*}
     \begin{equation*}
g(u,Su)= \alpha\beta+\beta\gamma+\delta\gamma-\alpha\delta,\quad g(u, S^{2}u)=0.
\end{equation*}
We assume that $u$ is a unit vector and from the latter equalities we get
\begin{equation}\label{cos-alfa}
    \alpha^{2}+\beta^{2}+\gamma^{2}+\delta^{2}=1,\quad \alpha\beta+\beta\gamma+\delta\gamma-\alpha\delta=\cos\varphi,
\end{equation}
where $\varphi=\angle(u, Su)$.
Using \eqref{r+k} and \eqref{cos-alfa} we calculate
\begin{equation}\label{r+r}
    \begin{split}
   2R(u,Su,u,Su)+&R(u,S^{2}u,u,S^{2}u)= R_{1}+ 2(1-\cos^{2}\varphi) R_{2}\\+&\cos^{2}\varphi (4R_{3}+6 R_{4})+ 4\cos\varphi(R_{5}+ R_{6}).
    \end{split}
\end{equation}

Bearing in mind that $x$ induces an orthonormal $S$-basis and $u$ is a unit vector, with the help of \eqref{2.1}, \eqref{ugli} and \eqref{3.3}, we find
     \begin{equation}\label{mu3}
     \begin{split}
       k(x,Sx)=R(x, Sx, x, Sx),\quad k(x,S^{2}x)=R(x, S^{2}x, x,  S^{2}x),\\
       k(u,Su)=\frac{R(u, Su, u, Su)}{1-\cos^{2}\varphi} ,\quad k(u,S^{2}u)=R(u, S^{2}u, u,  S^{2}u).
       \end{split}
 \end{equation}
Taking into account \eqref{mu3}, from \eqref{r+r} we obtain
\begin{equation}\label{mu+mu2}
    \begin{split}
   2(1-\cos^{2}\varphi)k(u,Su)+&k(u,S^{2}u)=k(x,S^{2}x) +2(1-\cos^{2}\varphi)k(x,Sx)\\+&\cos^{2}\varphi (4R_{3}+6R_{4})+4\cos\varphi(R_{5}+R_{6}).
    \end{split}
\end{equation}
The equality \eqref{mu+mu} follows from \eqref{r1-6} and \eqref{mu+mu2}.
   \end{proof}
Next, we express the sectional curvatures of the 2-planes $\{u, Su\}$ and $\{u, S^{2}u\}$ by the sectional curvatures of the 2-planes of the same type, whose angles between basis vectors are constants.
    \begin{thm}
Let $(M, g, S)$ be a manifold with property \eqref{R}. If vectors $x$, $u$, $v$ and $w$ induce $S$-bases and $\{S^{3}x, S^{2}x, Sx, x\}$ is an orthonormal basis, then the sectional curvatures satisfy
\begin{equation}\label{mu-r2}
\begin{split}
   2(1-\cos^{2}\varphi)k(u,Su)+k(u,S^{2}u)&=(1-4\cos^{2}\varphi)\big(2k(x,Sx)+k(x,S^{2}x)\big)  \\& +(2\cos^{2}\varphi+\cos\varphi)\big(\frac{3}{2}k(v,Sv)+k(v,S^{2}v)\big)\\&+(2\cos^{2}\varphi-\cos\varphi)\big(\frac{3}{2}k(w,Sw) +k(w,S^{2}w)\big),
   \end{split}
\end{equation}
where $\angle(u, Su)=\varphi$,  $\angle(v, Sv)=\frac{\pi}{3}$,  $\angle(w, Sw)=\frac{2\pi}{3}$.
\end{thm}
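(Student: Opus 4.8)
The plan is to read the identity from the previous theorem as a quadratic polynomial in the single variable $\cos\varphi$, and then to recover its value at a general angle by Lagrange interpolation through three conveniently chosen angles. Recall from \eqref{mu+mu2} that for every unit vector $u$ inducing an $S$-basis with $\angle(u,Su)=\varphi$,
\begin{equation*}
2(1-\cos^{2}\varphi)k(u,Su)+k(u,S^{2}u)=k(x,S^{2}x)+2(1-\cos^{2}\varphi)k(x,Sx)+\cos^{2}\varphi(4R_{3}+6R_{4})+4\cos\varphi(R_{5}+R_{6}),
\end{equation*}
where $R_{3},R_{4},R_{5},R_{6}$ are the fixed curvature components \eqref{r1-6} attached to the orthonormal $S$-basis induced by $x$. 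Since sectional curvature is scale-invariant, I may take $u,v,w$ to be unit vectors without loss of generality. Rewriting the right-hand side with $\cos\varphi$ as the variable, it is the quadratic
\begin{equation*}
\Phi(\cos\varphi)=\big(2k(x,Sx)+k(x,S^{2}x)\big)+4(R_{5}+R_{6})\cos\varphi+\big(4R_{3}+6R_{4}-2k(x,Sx)\big)\cos^{2}\varphi,
\end{equation*}
whose three coefficients depend only on the basis fixed by $x$, not on the chosen vector. Thus the whole left-hand side $2(1-\cos^{2}\varphi)k(u,Su)+k(u,S^{2}u)$ of \eqref{mu-r2} equals $\Phi(\cos\varphi)$.

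Next I would evaluate $\Phi$ at the three admissible angles $\frac{\pi}{2}$, $\frac{\pi}{3}$, $\frac{2\pi}{3}$, all of which lie in the interval $(\frac{\pi}{4},\frac{3\pi}{4})$ required by \eqref{inequalities}, so that vectors inducing $S$-bases with these angles indeed exist. Since the basis induced by $x$ is orthonormal we have $\angle(x,Sx)=\frac{\pi}{2}$, hence $\cos\varphi=0$ and $\Phi(0)=2k(x,Sx)+k(x,S^{2}x)$. For $v$ with $\angle(v,Sv)=\frac{\pi}{3}$ one has $\cos\varphi=\frac{1}{2}$ and $2(1-\cos^{2}\varphi)=\frac{3}{2}$, so $\Phi(\frac{1}{2})=\frac{3}{2}k(v,Sv)+k(v,S^{2}v)$; for $w$ with $\angle(w,Sw)=\frac{2\pi}{3}$ one has $\cos\varphi=-\frac{1}{2}$ and again $2(1-\cos^{2}\varphi)=\frac{3}{2}$, so $\Phi(-\frac{1}{2})=\frac{3}{2}k(w,Sw)+k(w,S^{2}w)$. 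These three evaluations are exactly the three bracketed combinations appearing on the right-hand side of \eqref{mu-r2}.

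Finally, since $\Phi$ is a quadratic whose values are now known at the three distinct nodes $0,\frac{1}{2},-\frac{1}{2}$, Lagrange interpolation determines it everywhere:
\begin{equation*}
\Phi(t)=(1-4t^{2})\,\Phi(0)+(2t^{2}+t)\,\Phi(\tfrac{1}{2})+(2t^{2}-t)\,\Phi(-\tfrac{1}{2}),
\end{equation*}
the three weights being the Lagrange basis polynomials for these nodes. Substituting $t=\cos\varphi$ and inserting the three evaluations above yields precisely \eqref{mu-r2}. I expect no serious obstacle: the entire content sits in recognizing the quadratic dependence on $\cos\varphi$ delivered by Theorem~\ref{th4}. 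The only points requiring care are the admissibility of the three chosen angles via \eqref{inequalities} (guaranteeing the vectors $v,w$ can exist), the reduction to unit vectors by scale-invariance, and the routine check that the interpolation weights $1-4\cos^{2}\varphi$ and $2\cos^{2}\varphi\pm\cos\varphi$ coincide with the coefficients written in \eqref{mu-r2}.
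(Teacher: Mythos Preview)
Your proof is correct and follows essentially the same route as the paper: specialize \eqref{mu+mu2} at the angles $\frac{\pi}{3}$ and $\frac{2\pi}{3}$ to eliminate the unknown curvature combinations $2R_{3}+3R_{4}$ and $R_{5}+R_{6}$, then substitute back. Your packaging via Lagrange interpolation of the quadratic $\Phi(\cos\varphi)$ through the nodes $0,\pm\tfrac{1}{2}$ is a clean way to see why the coefficients $1-4\cos^{2}\varphi$ and $2\cos^{2}\varphi\pm\cos\varphi$ arise, whereas the paper obtains them by solving the resulting $2\times 2$ linear system directly; the content is the same.
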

\begin{proof}
In \eqref{mu+mu2} first we substitute $u=v$ and then $u=w$. Successively we get
\begin{align*}
&\frac{3}{2}\big(k(v, Sv)-k(x, Sx)\big)+k(v, S^{2}v)-k(x, S^{2}x)=\frac{1}{2}(2R_{3}+3R_{4})+2(R_{5}+R_{6}),\\
&\frac{3}{2}\big(k(w, Sw)-k(x, Sx)\big)+k(w, S^{2}w)-k(x, S^{2}x)=\frac{1}{2}(2R_{3}+3R_{4})-2(R_{5}+R_{6}).
\end{align*}
We solve the above system and express the tensors $2R_{3}+3R_{4}$ and $R_{5} + R_{6}$ by the curvatures. We substitute these solutions into \eqref{mu+mu2}, and find \eqref{mu-r2}.
 \end{proof}

 The following two statements refer to the manifolds $(M, g, S)$ with property \eqref{R1}, which are a special case of the manifolds considered so far.
\begin{prop}\label{pred_L1}
Let $(M, g, S)$ be a manifold with property \eqref{R1}. If a vector $x$ induces an $S$-basis, then for the sectional curvatures of the basic $2$-planes we have
\begin{equation}\label{3.21*}
\begin{split}
k(x,Sx)&=k(Sx,S^{2}x)=k(S^{2}x,S^{3}x)=k(S^{3}x,x),\\
k(x,S^{2}x)&=S(Sx,S^{3}x)=2(1-\cos^{2}\varphi)k(x,Sx),
\end{split}
\end{equation}
where $\angle(x, Sx)=\varphi.$
\end{prop}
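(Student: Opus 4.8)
The plan is to obtain \eqref{3.21*} from the facts already proved for the wider class \eqref{R} and then to treat the single genuinely new relation separately. Since \eqref{R1} implies \eqref{R}, Theorem~\ref{predl} applies to $(M,g,S)$; its conclusions \eqref{3.21} and \eqref{3.31} are precisely the first line of \eqref{3.21*} and the equality $k(x,S^{2}x)=k(Sx,S^{3}x)$. Hence only the relation $k(x,S^{2}x)=2(1-\cos^{2}\varphi)k(x,Sx)$ remains to be established.

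I would first convert this relation into a pure curvature identity. By \eqref{2.1} one has $g(Sx,Sx)=g(S^{2}x,S^{2}x)=g(x,x)$, and \eqref{ugli} gives $g(x,Sx)=g(x,x)\cos\varphi$ together with $g(x,S^{2}x)=0$, since $\angle(x,S^{2}x)=\tfrac{\pi}{2}$. Inserting these values into the denominators in \eqref{3.3} yields
\begin{equation*}
k(x,Sx)=\frac{R(x,Sx,x,Sx)}{g(x,x)^{2}(1-\cos^{2}\varphi)},\qquad k(x,S^{2}x)=\frac{R(x,S^{2}x,x,S^{2}x)}{g(x,x)^{2}}.
\end{equation*}
Consequently the remaining equality is equivalent to the single tensor identity
\begin{equation*}
R(x,S^{2}x,x,S^{2}x)=2\,R(x,Sx,x,Sx).
\end{equation*}

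The heart of the argument is to prove this identity, and it is here that the stronger hypothesis \eqref{R1} is used. Following the substitution technique of Theorem~\ref{predl}, I would first apply \eqref{R1} — keeping the first two arguments fixed and stripping copies of $S$ from the last two — to obtain
\begin{equation*}
R(x,S^{2}x,x,S^{2}x)=R(x,S^{2}x,Sx,S^{3}x),\qquad R(x,Sx,S^{2}x,S^{3}x)=R(x,Sx,x,Sx).
\end{equation*}
Next, the first Bianchi identity applied to $x,Sx,S^{2}x$ with spectator $S^{3}x$, together with the skew-symmetries of $R$, gives
\begin{equation*}
R(x,S^{2}x,Sx,S^{3}x)=R(x,Sx,S^{2}x,S^{3}x)+R(x,S^{3}x,Sx,S^{2}x).
\end{equation*}
Finally, \eqref{R} (applying $S$ to all four arguments and using \eqref{q4} to absorb the resulting sign) yields $R(x,S^{3}x,Sx,S^{2}x)=R(x,Sx,S^{2}x,S^{3}x)$. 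Chaining these three displays produces $R(x,S^{2}x,x,S^{2}x)=2R(x,Sx,S^{2}x,S^{3}x)=2R(x,Sx,x,Sx)$, which is exactly the identity sought.

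The main obstacle is accounting for the factor $2$. The permutations furnished by \eqref{R1} alone only identify the four components $R(x,S^{2}x,x,S^{2}x)$, $R(x,S^{2}x,Sx,S^{3}x)$, $R(x,Sx,S^{2}x,S^{3}x)$ and $R(x,Sx,x,Sx)$ with one another and can never produce a numerical coefficient; the coefficient $2$ enters solely through the first Bianchi identity, which splits the \emph{mixed} component $R(x,S^{2}x,Sx,S^{3}x)$ into two copies of $R(x,Sx,S^{2}x,S^{3}x)$. In the component notation this is the relation $R_{1324}=2R_{1234}$ already recorded in \eqref{L2}, while \eqref{R1} is exactly what converts the \emph{diagonal} component $R(x,S^{2}x,x,S^{2}x)$ into the mixed one. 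Once this interplay is recognized, the remaining steps are routine.
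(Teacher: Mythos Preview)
Your proof is correct and follows essentially the same route as the paper: reduce the first line and $k(x,S^{2}x)=k(Sx,S^{3}x)$ to Theorem~\ref{predl}, then use \eqref{R1} together with the first Bianchi identity on the quadruple $x,Sx,S^{2}x,S^{3}x$ to obtain $R(x,S^{2}x,x,S^{2}x)=2R(x,Sx,x,Sx)$, from which the remaining equality follows via \eqref{3.3}. The paper records the same chain of curvature identities (citing \eqref{R1}, \eqref{slRv1}, \eqref{dop4}) but leaves the origin of the factor~$2$ implicit; your explicit invocation of the Bianchi relation $R_{1324}=2R_{1234}$ from \eqref{L2} makes that step transparent.
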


\begin{proof}
If $(M, g, S)$ has the property \eqref{R1}, then it satisfies \eqref{R}. Thus
having in mind \eqref{R1}, \eqref{slRv1} and \eqref{dop4}, we obtain
\begin{equation*}
\begin{split}
  R(x, S^{2}x, x, S^{2}x)=&R(Sx, S^{3}x, Sx, S^{3}x)=2R(x, Sx, x, Sx)\\=2R(x, S^{3}x, x, S^{3}x) &
  2R(Sx, S^{2}x, Sx, S^{2}x)=2R(S^{2}x, S^{3}x, S^{2}x, S^{3}x).
  \end{split}
\end{equation*}
 Then  \eqref{2.1} and \eqref{3.3} imply  \eqref{3.21*}.
\end{proof}

\begin{thm}\label{th6}
Let $(M, g, S)$ be a manifold with property \eqref{R1}. If vectors $x$, $u$ and $v$ induce $S$-bases and $\{S^{3}x, S^{2}x, Sx, x\}$ is an orthonormal basis, then the following equality is valid
\begin{equation}\label{mu-r4}
\begin{split}
   k(u,Su)=\frac{1}{1-\cos^{2}\varphi}(1+2\cos^{2}\varphi-3\cos\varphi)k(x, Sx)+\frac{3\cos\varphi}{2(1-\cos^{2}\varphi)}k(v, Sv),
   \end{split}
\end{equation}
where  $\angle(u, Su)=\varphi$, $\angle(v, Sv)=\frac{\pi}{3}$.
\end{thm}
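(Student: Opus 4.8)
The plan is to derive \eqref{mu-r4} from the master identity \eqref{mu+mu2} of Theorem~\ref{th4}, which is available because \eqref{R1} implies \eqref{R}. The whole point of the stronger hypothesis \eqref{R1} is that it forces extra coincidences among the curvature components $R_1,\dots,R_6$ of \eqref{r1-6}; these coincidences pin the combination $4R_3+6R_4$ to $k(x,Sx)$ and leave $R_5+R_6$ as the only unknown, so that a single auxiliary vector $v$ with $\angle(v,Sv)=\tfrac{\pi}{3}$ suffices, in contrast to the two auxiliary vectors used for the general case \eqref{R} in the preceding theorem.

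First I would record the relations coming directly from \eqref{R1}. Writing $Sx=S(x)$, $S^2x=S(Sx)$ and $S^3x=S(S^2x)$ and applying \eqref{R1} in the last two slots gives $R_3=R(x,Sx,Sx,S^2x)=R(x,Sx,x,Sx)=R_2$ and $R_4=R(x,Sx,S^2x,S^3x)=R(x,Sx,Sx,S^2x)=R_3$, and likewise $R_6=R(x,Sx,Sx,S^3x)=R(x,Sx,x,S^2x)=R_5$. Hence $R_3=R_4=R_2$ and $R_5=R_6$. Since $\{S^3x,S^2x,Sx,x\}$ is orthonormal, \eqref{mu3} gives $R_2=k(x,Sx)$, so that $4R_3+6R_4=10\,k(x,Sx)$ is already expressed through the curvature $k(x,Sx)$.

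Next I would remove every $S^2$-plane curvature by means of Proposition~\ref{pred_L1}, that is \eqref{3.21*}: for each of the $S$-bases induced by $x$, $u$, $v$ one has $k(\cdot,S^2\cdot)=2(1-\cos^2\vartheta)k(\cdot,S\cdot)$ with the corresponding angle $\vartheta$. Substituting $u=v$, $\angle(v,Sv)=\tfrac{\pi}{3}$ (so $\cos\vartheta=\tfrac12$) into \eqref{mu+mu2} and using $k(v,S^2v)=\tfrac32 k(v,Sv)$, $k(x,S^2x)=2k(x,Sx)$ together with $4R_3+6R_4=10\,k(x,Sx)$, the $R_3,R_4$ terms are known, and I can solve the resulting scalar equation for $R_5+R_6$, obtaining $R_5+R_6=\tfrac32 k(v,Sv)-3\,k(x,Sx)$.

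Finally I would return to \eqref{mu+mu2} for the general unit vector $u$ with $\angle(u,Su)=\varphi$, replace $k(u,S^2u)=2(1-\cos^2\varphi)k(u,Su)$ and $k(x,S^2x)=2k(x,Sx)$, insert $4R_3+6R_4=10\,k(x,Sx)$ and the value of $R_5+R_6$ just found, and collect terms. The left-hand side becomes $4(1-\cos^2\varphi)k(u,Su)$, and dividing by $4(1-\cos^2\varphi)$ yields \eqref{mu-r4} once the coefficient of $k(x,Sx)$ is simplified to $\tfrac{1+2\cos^2\varphi-3\cos\varphi}{1-\cos^2\varphi}$ and that of $k(v,Sv)$ to $\tfrac{3\cos\varphi}{2(1-\cos^2\varphi)}$. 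The only genuinely delicate point is the second paragraph: one must check that under \eqref{R1} the combination $4R_3+6R_4$ really is determined by $k(x,Sx)$ (this is exactly what $R_3=R_4=R_2$ provides, and it is strictly stronger than what \eqref{R} or Proposition~\ref{pred_L1} alone give), for otherwise a second auxiliary angle would be unavoidable and the clean two-term formula \eqref{mu-r4} could not hold.
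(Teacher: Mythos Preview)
Your argument is correct and the arithmetic checks out: under \eqref{R1} one has $R_3=R_4=R_2$ and $R_5=R_6$, so $4R_3+6R_4=10\,k(x,Sx)$; substituting into \eqref{mu+mu2} and using \eqref{3.21*} to replace $k(u,S^2u)$ and $k(x,S^2x)$ collapses everything to $(1-\cos^2\varphi)k(u,Su)=(1+2\cos^2\varphi)k(x,Sx)+\cos\varphi(R_5+R_6)$, after which the substitution $u=v$ and back-substitution give \eqref{mu-r4} exactly as you say.

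The paper proceeds slightly differently. Rather than starting from the packaged sum identity \eqref{mu+mu2} and then stripping away the $k(\cdot,S^2\cdot)$ terms via Proposition~\ref{pred_L1}, it returns to the raw expansion of $R(u,Su,u,Su)$ alone (the long $\alpha,\beta,\gamma,\delta$ formula inside the proof of Theorem~\ref{th4}) and simplifies it directly using the full set of relations $R_1=2R_2=2R_3=2R_4$, $R_5=R_6$; this yields the intermediate formula \eqref{mu-L1} in one step. Your route and the paper's route produce literally the same intermediate equation (your $(R_5+R_6)$ is the paper's $2R(x,Sx,x,S^2x)$), and from that point on the two proofs coincide. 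Your approach has the advantage of reusing already-stated results (\eqref{mu+mu2} and \eqref{3.21*}) instead of revisiting the explicit coordinate expansion; the paper's approach has the advantage of never touching $k(u,S^2u)$ at all.
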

\begin{proof}
We apply \eqref{R1} to \eqref{slRv1}, \eqref{dop4}, \eqref{dop3} -- \eqref{dop8}, and for the components \eqref{r1-6} we get
\begin{equation}\label{R1-R4}
  R_{1}=2R_{2}=2R_{3}=2R_{4},\quad R_{5}=R_{6}.
\end{equation}
Using the presentation of $R(u,Su, u, Su)$ in Theorem~\ref{th4} and having in mind \eqref{R1-R4} we calculate
\begin{equation}\label{R-L}
\begin{split}
       R(u,Su, u, Su)&=\big((\alpha^{2}+\beta^{2}+\gamma^{2}+\delta^{2})^{2}+2(\alpha\beta+\beta\gamma+\delta\gamma-\alpha\delta)^{2}\big)R(x, Sx, x, Sx)\\&+2(\alpha\beta+\beta\gamma+\delta\gamma-\alpha\delta) R(x, Sx, x, S^{2}x).
       \end{split}
 \end{equation}
 We assume that $u$ is a unit vector and from \eqref{cos-alfa} and \eqref{R-L} it follows
\begin{equation}\label{R-L1}
\begin{split}
       R(u,Su, u, Su)=(1+2\cos^{2}\varphi)R(x, Sx, x, Sx)+2\cos\varphi R(x, Sx, x, S^{2}x).
       \end{split}
 \end{equation}
Then \eqref{mu3} imply
\begin{equation}\label{mu-L1}
\begin{split}
   k(u,Su)=\frac{1+2\cos^{2}\varphi}{1-\cos^{2}\varphi}k(x, Sx)+\frac{2\cos\varphi}{1-\cos^{2}\varphi} R(x, Sx, x, S^{2}x).
   \end{split}
\end{equation}
Now we substitute  $v$ for $u$ in \eqref{mu-L1} and express $R(x, Sx, x, S^{2}x)$ by $k(v,Sv)$ and $k(x,Sx)$. We apply this expression to \eqref{mu-L1} and obtain \eqref{mu-r4}.
 \end{proof}

\section{Conclusion}
In this paper we obtain curvature properties of a 4-dimensional Riemannian manifold $(M, g, S)$ with \eqref{R} associated with a locally conformal Hermitian manifold $(M, g, J=S^{2})$. Furthermore, the curvature properties of the associated manifold $(M, g, J)$ deserve to be studied, also in the case when the manifold belongs to the class $\mathcal{L}_{3}$, according to Grey's classification.
The problem of constructing examples of 4-dimensional almost Einstein manifolds $(M, g, S)$ with \eqref{R} also remains.

\author{Iva Dokuzova\\                                 
    Department of Algebra and Geometry\\University of Plovdiv Paisii Hilendarski\\ 24 Tzar Asen, 4000 Plovdiv, Bulgaria\\
e-mail:dokuzova@uni-plovdiv.bg}

\end{document}